\documentclass[a4 paper,11pt]{amsart}

\usepackage{graphicx}
\usepackage{amsmath} %
\usepackage{amssymb} %
\usepackage{amscd} %
\usepackage{amsthm} %
\usepackage{mathrsfs} %
\usepackage{euscript}
\usepackage{eufrak}
\usepackage{ulem}
\usepackage[alphabetic]{amsrefs}
\usepackage{ascmac} 
\usepackage{comment}
\usepackage{bm}
\usepackage{picture}
\usepackage{caption}
\usepackage{tikz}
\usetikzlibrary{positioning, intersections,calc,arrows.meta, math}

\usepackage{hyperref}
\usepackage{xcolor}
\definecolor{unbleu}{rgb}{0.03, 0.15, 0.4}

\hypersetup{
pdfborder = {0 0 0},
colorlinks,
linkcolor=unbleu,
citecolor=unbleu,
urlcolor=unbleu
}

\usepackage{fancyhdr}
 \newtheorem{theorem}{Theorem}[section]
 \newtheorem{lemma}[theorem]{Lemma}

\newtheorem{maintheorem}{Theorem} 
  
\theoremstyle{definition}
\newtheorem{definition}[theorem]{Definition}

\newtheorem{example}[theorem]{Example}






\begin{document}

\title[]{On the stability of the penalty function for a nearest-neighbor $\mathbb{Z}^2$ subshift of finite type with the single-site fillability}

\author[C. Oguri]{Chihiro Oguri}
\address{Department of Mathematics, Ochanomizu University, 2-1-1 Otsuka, Bunkyo-ku, Tokyo, 112-8610, Japan}
\email{}

\author[M. Shinoda]{Mao Shinoda}
\address{Department of Mathematics, Ochanomizu University, 2-1-1 Otsuka, Bunkyo-ku, Tokyo, 112-8610, Japan}
\email{shinoda.mao@ocha.ac.jp}

\subjclass[2010]{\textcolor{black}{Primary} 37B51, 37B10, 37B25}
\keywords{}

\begin{abstract}

We investigate the stability of maximizing measures for a penalty function of a two-dimensional subshift of finite type, building on the work of Gonschorowski et al. \cite{GQS}. In the one-dimensional case, such measures remain stable under Lipschitz perturbations for any subshift of finite type. However, instability arises for a penalty function of the Robinson tiling, which is a two-dimensional subshift of finite type with no periodic points and zero entropy. This raises the question of whether stability persists in two-dimensional subshifts of finite type with positive topological entropy. In this paper, we address this question by studying a nearest-neighbor subshift of finite type satisfying the single-site fillability property.
Our main theorem establishes that, in contrast to previous results, a penalty function of such a subshift of finite type remains stable under Lipschitz perturbations.

\end{abstract}

\maketitle

\section{Introduction}

Ergodic optimization is the study of maximizing measures.
In its most basic form, let $T:X\rightarrow X$ be a continuous map on a compact metric space $X$ and for a continuous function $\varphi:X\rightarrow \mathbb{R}$ we consider the {\it maximum ergodic average}
\begin{align*}
    \beta(\varphi)=\sup_{\mu\in \mathcal{M}_T(X)}\int \varphi\ d\mu
\end{align*}
where $\mathcal{M}_T(X)$ is the space of $T$-invariant Borel probability measures on $X$ endowed with the weak*-topology.
An invariant measure which attains the maximum is called a {\it maximizing measure} for $\varphi$ and denote by $\mathcal{M}_{{\rm max}}(\varphi)$ the set of maximizing measures for $\varphi$.


The stability of maximizing measures for a penalty function of a subshift of finite type was established by Gonschorowski et al. \cite{GQS}.
A penalty function is defined on the forbidden set of a subshift of finite type, assigning a value of $0$ to admissible local configurations near the origin and $-1$ otherwise (see \S2 for more details). 
It is straightforward to see that every maximizing measure of a penalty function is supported on the given subshift of finite type. 
In the one-dimensional case, maximizing measures remain supported on the given subshift under Lipschitz perturbations for any subshift of finite type. 
However, in the two-dimensional case, there exists a subshift of finite type where this stability fails.

In \cite{GQS}, the authors highlight the difference between one and two dimensions, demonstrating that instability arises in the penalty function of the Robinson tiling, a two-dimensional subshift of finite type with no periodic points and zero entropy. In contrast, in the one-dimensional setting, stability results are established for subshifts of finite type that typically possess abundant periodic points and positive topological entropy. This contrast raises the natural question of whether stronger topological properties—such as positive topological entropy, rich periodic structure, or some mixing property—might guarantee stability in the two-dimensional case as well.
In this paper, we consider this question by investigating the penalty function on {\it nearest-neighbor $\mathbb{Z}^2$ subshifts of finite type} (n.n. SFTs) that satisfy the {\it single-site fillability} (SSF) property  (See for definitions in \S 2). This class includes, as a typical example, the {\it hard square shift}, a well-known two-dimensional subshift of finite type with positive entropy. Our main theorem establishes that, in contrast to the result on $\mathbb{Z}^2$ subshifts of finite type presented by Gonschorowski et al., the penalty function of any such system remains stable under Lipschitz perturbations.

Informally, a subshift of finite type is defined by specifying a finite set of finite ``forbidden patterns'' \(F\) made up of letters from an {\it alphabet} \(\mathcal{A}\), and defining \(X_F\) to be the set of configurations in \(\mathcal{A}^{\mathbb{Z}^2}\) in which no pattern from \(F\) appears (see §2 for more details). The set \(F\) is called a {\it forbidden set}.  
A subshift of finite type is called a {\it nearest-neighbor subshift of finite tyep} (n.n. SFT) if \(F\) can be chosen to consist only of patterns supported on pairs of adjacent sites.
For a n.n. SFT with a forbidden set $F$ we define the penalty function as follows:
 $$f(x)=\left\{
            \begin{array}{cc}
                -1&\mbox{if}\quad x_{(0,0)}x_{(0,1)}\in F\quad \mbox{or}\quad x_{(0,0)}x_{(1,0)}\in F,\\
                0&\mbox{otherwise}.
            \end{array}
            \right. .$$
Now we can state our main theorem.
\begin{maintheorem}
\label{stability}
    Let $X$ be a $\mathbb{Z}^2$ nearest-neighbor subshift of finite type with the single-site fillability and $f$ be the penalty function.
    Then there exists $\varepsilon>0$ such that for every Lipschitz continuous function $g$ with $\|f-g\|_{{\rm Lip}}<\varepsilon$, every maximizing measure of $g$ is supported on $X$.
\end{maintheorem}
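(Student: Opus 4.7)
The plan is to argue by contradiction. Suppose $\mu$ is a maximizing measure of $g$ not supported on $X$, so
\[
p := \mu\bigl(\{x : x_{(0,0)}x_{(1,0)} \in F \text{ or } x_{(0,0)}x_{(0,1)} \in F\}\bigr) > 0,
\]
and I aim to construct a shift-invariant measure $\mu'$ supported on $X$ with $\int g\, d\mu' > \int g\, d\mu$, contradicting maximality. As an easy first step, pick any $\nu \in \mathcal{M}_T(X)$, which exists since $X$ is nonempty; this gives $\beta(f) = 0$ and hence $\beta(g) \geq \int g\, d\nu \geq -\|f-g\|_\infty > -\varepsilon$. Combined with $\int f\, d\mu = -p$ and $\int g\, d\mu \geq \beta(g)$, this forces $p \leq 2\varepsilon$, so the bad-bond density is small but in general nonzero.

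The core step is to construct $\mu'$ via a shift-equivariant local ``repair'' map $\Phi$ built from the SSF assumption. Using an auxiliary i.i.d.\ priority field $(U_v)_{v \in \mathbb{Z}^2}$ independent of $\mu$, I would iterate a sparse-parallel update: at each stage, for every currently bad site $v$ whose priority $U_v$ is strictly minimal in a suitable neighborhood, replace $x_v$ by a letter compatible with its four current neighbors. By SSF such a compatible letter always exists, and because non-interacting simultaneous updates do not create new bad bonds at untouched sites, the bad-bond density contracts by a definite geometric factor per stage. Passing to the limit (or averaging the iterates) produces a joint-shift-equivariant map $\Phi$ sending $\mu$-a.e.\ configuration into $X$, and I would set $\mu' := \Phi_*(\mu \otimes \mathrm{law}(U))$; crucially, by stationarity plus a union bound, the density of sites at which $\Phi(x) \neq x$ is $O(p)$.

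With $\mu'$ in hand, decompose
\begin{align*}
\int g\, d\mu' - \int g\, d\mu = \Bigl(\int f\, d\mu' - \int f\, d\mu\Bigr) + \int (g-f)\, d\mu' - \int (g-f)\, d\mu.
\end{align*}
The first bracket equals $0 - (-p) = p$ since $\mu'$ lives on $X$. For the second, the natural coupling via $\Phi$ yields $\mathbb{E}_\mu[d(\Phi x, x)] = O(p)$ in the standard ultrametric on $\mathcal{A}^{\mathbb{Z}^2}$ (bounded above by $\mathbb{E}[\sum_{v \in S(x)} 2^{-|v|}]$, which is $O(p)$ by shift-invariance), so by Kantorovich--Rubinstein the second summand is bounded in absolute value by $\mathrm{Lip}(g-f) \cdot O(p) \leq \varepsilon \cdot O(p)$. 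Thus $\int g\, d\mu' - \int g\, d\mu \geq p (1 - C\varepsilon) > 0$ once $\varepsilon$ is small enough, giving the required contradiction.

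The main technical obstacle is the sparse-parallel-SSF construction: establishing the geometric decay of bad-bond density per iteration, verifying joint-shift-invariance, and --- most delicately --- bounding the \emph{cumulative} density of modified sites by $O(p)$ rather than a constant. This last point is what makes the Lipschitz-perturbation estimate tight and is precisely where SSF (local fillability of $X$) is indispensable; by contrast, the non-local structure of the Robinson tiling in \cite{GQS} forbids any such bounded-displacement repair, which is the geometric origin of the instability observed there.
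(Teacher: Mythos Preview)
Your approach is correct in outline and genuinely different from the paper's. The paper fixes $\varepsilon=1/64$, takes an ergodic $\mu$ not supported on $X$, picks a $\mu$-generic point $\underline{x}$, and then repairs $\underline{x}$ \emph{deterministically} shell by shell: on each annulus $\Lambda_i\setminus\Lambda_{i-1}$ the bad segments on the four sides are replaced using SSF (their Lemma~3.2), yielding a limit $\tilde{\underline{x}}\in X$. The bulk of the argument is a direct comparison of Birkhoff sums $S_{\Lambda_N}g(\underline{x})-S_{\Lambda_N}g(\tilde{\underline{x}})$, with a rather explicit geometric bookkeeping of how far each site lies from the nearest modified site; this is where the constant $1/64$ comes from. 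Your route---a shift-equivariant randomized repair $\Phi$ plus Kantorovich--Rubinstein applied to $g-f$---bypasses all of that combinatorics and yields the same conclusion more conceptually, at the cost of not producing an explicit $\varepsilon$. Note also that your preliminary observation $p\le 2\varepsilon$ is never used; your final inequality $p(1-C\varepsilon)>0$ already works for all $p>0$, so the paper's Case~1/Case~2 split is unnecessary in your framework.

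Two remarks on the technical obstacle you flag. First, the ``most delicate'' point---cumulative modified-site density $O(p)$---is in fact easy: once a site is repaired via SSF all four of its incident bonds are good, and any later repair of a neighbor (being SSF-compatible with the \emph{current} letter at $v$) keeps them good; hence every site is modified at most once, and the modified set is contained in the set of sites initially incident to a bad bond, which has density at most $4p$. Second, the genuinely delicate point is a.s.\ \emph{termination}: with a single fixed priority field an infinite descending-priority path of bad sites would never be repaired, so you should resample the priorities independently at each stage (then every still-bad site is locally minimal with probability bounded below, and Borel--Cantelli finishes). With that adjustment your sketch goes through.
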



We remark that the stability result for a n.n. SFT satisfying SSF is relatively straightforward since forbidden patterns can be easily eliminated by making local modifications guaranteed by SSF.
However, extending this result to more general SFTs appears to be substantially more difficult, as in such problems there is no method to extract the precise locations of bad words, and only their proportion can be accessed.
This lack of positional information prevents us from effectively handling configurations.
As a result, extending the stability result to systems with properties such as block gluing would likely require fundamentally new techniques.

For the remainder of this paper, we fix our notations and definitions in \S2 and provide the proof of the main theorem in \S3.

\section{Settings}
We denote the origin \((0,0)\) of \(\mathbb{Z}^2\) by \(\boldsymbol{0}\) to simplify notation.
For $\boldsymbol{u},\boldsymbol{v}\in \mathbb{Z}^d$ are said to be {\it adjacent} if $|\boldsymbol{u}-\boldsymbol{v}|=1$, where $|\boldsymbol{u}|=|u_1|+|u_2|$ for $\boldsymbol{u}=(u_1, u_2)\in \mathbb{Z}^2$.
The {\it boundary} of a set $S\subset \mathbb{Z}^2$, denoted by $\partial S$, is the set of $\boldsymbol{v}\in \mathbb{Z}^2\setminus S$ which are adjacent to some element of $S$.
For any $a,b\in \mathbb{Z}$ with $a<b$, we use $[a,b]$ to denote $\{a,a+1, \ldots, b\}$.
For each $n\geq 0$ define the box of size $n$ as
\begin{align*}
    \Lambda_n=[-n,n]\times [-n, n].
\end{align*}
The cardinality of $\Lambda_n$ is given by $\lambda_n=\#\Lambda_n=(2n+1)^2$.

Let $\mathcal{A}$ be a finite set, which we call an {\it alphabet}.
The $\mathbb{Z}^2$ {\it full shift} on $\mathcal{A}$ is the set $\mathcal{A}^{\mathbb{Z}^2}$, endowed with the product topology of the discrete topology.
Define a metric by
\begin{align*}
    d(\underline{x},\underline{y})=\left\{\begin{array}{cc}
      \frac{1}{2^i}   &  \underline{x}\neq \underline{y}\\
       0  & \mbox{otherwise}
    \end{array}
    \right.
\end{align*}
for $\underline{x},\underline{y}\in \mathcal{A}^{\mathbb{Z}^2}$ where $i=\inf \{\|\boldsymbol{u}\|_{\infty}: x_{\boldsymbol{u}}\neq y_{\boldsymbol{u}}\}$.
Then, this metric is compatible with the product topology.

For any full shift $\mathcal{A}^{\mathbb{Z}^2}$, we define the $\mathbb{Z}^2$-action $\{\sigma_{\boldsymbol{u}}\}_{\boldsymbol{u}\in \mathbb{Z}^2}$ on $\mathcal{A}^{\mathbb{Z}^2}$ as follows: for any $\boldsymbol{u}\in \mathbb{Z}^2$ and $\underline{x}\in \mathcal{A}^{\mathbb{Z}^2}$, $(\sigma^{\boldsymbol{u}}(\underline{x}))_{\boldsymbol{v}}=x_{\boldsymbol{u}+\boldsymbol{v}}$ for all $\boldsymbol{u}\in \mathbb{Z}^2$.

A {\it configuration} \(w\) on the alphabet \(\mathcal{A}\) is any mapping from a non-empty subset \(S\) of \(\mathbb{Z}^2\) to \(\mathcal{A}\), where \(S\) is called the {\it shape} of \(w\).  
If $S$ is finite, we call a configuration $w$ on $S$ is finite.
For any configuration \(w\) with shape \(S\) and any \(T \subset S\), we denote by \(w|_T\) the restriction of \(w\) to \(T\), i.e., the {\it subconfiguration} of \(w\) supported on \(T\).  
Let \(S, T \subset \mathbb{Z}^2\) with \(S \cap T = \emptyset\), and let \(w\) and \(w'\) be configurations with shapes \(S\) and \(T\), respectively.  
Then the {\it concatenation} of \(w\) and \(w'\) is the configuration on \(S \cup T\) defined by \((ww')|_S = w\) and \((ww')|_T = w'\), which is denoted by \(ww'\).
Let \(\mathcal{A}^*\) be the set of all configurations defined on finite subsets of \(\mathbb{Z}^2\).

A subset $X\subset \mathcal{A}^{\mathbb{Z}^2}$ is a {\it subshift} if it is closed and {\it shift-invariant}, i.e. for any $\underline{x}$ and $\boldsymbol{u}\in \mathbb{Z}^2$, $\sigma^{\boldsymbol{u}}(\underline{x})\in X$.
It is well known that any subshift can be also defined in terms of forbidden patterns: for a countable family $F$ of finite configurations, define
\begin{align*}
    X=X_F=\{\underline{x}\in \mathcal{A}^{\mathbb{Z}^2} \mid \sigma^{\boldsymbol{u}}(\underline{x})|_S\notin F\ \mbox{for all finite}\ S\subset \mathbb{Z}^d, \ \mbox{for all}\ \boldsymbol{u}\in \mathbb{Z}^2\}.
\end{align*}
Then $X=X_F$ is a subshift and all subshift can be represented in this way.


A subshift \(X\) is a {\it shift of finite type} (SFT) if there exists a finite collection \(F \subset \mathcal{A}^*\), called the {\it forbidden set}, such that \(X = X_F\).
If \(F\) consists only of configurations on pairs of adjacent sites, \(X\) is called a {\it nearest-neighbor shift of finite type} (n.n. SFT).  
Hereafter, for nearest-neighbor SFTs, we assume without further comment that their forbidden sets consist only of patterns defined on shapes of the form \(\{\boldsymbol{0}, \boldsymbol{0}+e_i\}\) for \(i=1,2\).

Let $X$ be a subshift.
A configuration $w$ on a $S\subset \mathbb{Z}^2$ is {\it globally admissible} for $X$ if there exists $\underline{x}\in X$ such that $x|_S=w$.
Let $F$ be a forbidden set defining $X$.
A configuration $w$ on $S\subset \mathbb{Z}^2$ is {\it locally admissible} for $X=X_F$ if for every $S'\subset S$, $w|_S\neq F$, up to translation.

Many combinatorial and topological mixing properties have been studied in \cite{MP15, Br16, BMP18}.
Here, we consider a strong combinatorial mixing property, the single-site fillability, introduced in \cite{MP15}.

\begin{definition}
    A n.n. SFT $X$ is {\it single-site fillable} (SSF) if for a finite forbidden set $F\subset \mathcal{A}^*$ such that $X=X_F$ and for every configuration $w$ on the shape on $ \mathcal{A}^{\partial\{\boldsymbol{u}\}}$ for some $\boldsymbol{u}\in \mathbb{Z}^2$, there exists $a\in \mathcal{A}^{\{\boldsymbol{u}\}}$ such that $w a$ is locally admissible.
\end{definition}

The property SSF is a generalization of the concept of a {\it safe symbol}.
The symbol $a\in \mathcal{A}^{\{\boldsymbol{0}\}}$ in the definition of SSF may depend on the configuration $w\in \mathcal{A}^{\partial\{\boldsymbol{0}\}}$.
If such a symbol \(a\) can be chosen independently of the surrounding configuration \(w\), then \(a\) is called a {\it safe symbol} (see for example \cite{MP15, Br16} for discussions on the relationships between mixing properties).
Note that for n.n. SFT $X=X_F$, a locally admissible configuration is globally admissible\cite{MP15}.

The following are typical examples of n.n. SFTs.

\begin{example}[Hard square shift]
    Let $\mathcal{A}=\{0,1\}$. Define $F\subset \mathcal{A}^*$ by
    \begin{align*}
        F=\bigcup_{i=1,2}\{w: \{\boldsymbol{0}, \boldsymbol{0}+e_i\}\rightarrow \mathcal{A}\mid w(\boldsymbol{0})=w(\boldsymbol{0}+e_i)=\{1\}\}.
    \end{align*}
    Then \(X_F\) is called the {\it hard square shift}, which consists of configurations \(\underline{x}\in \mathcal{A}^{\mathbb{Z}^2}\) with no adjacent 1’s.
\end{example}

\begin{example}[$k$-Checkerboard shift]
    Let $k\geq 2$, $\mathcal{A}=\{0,1,\ldots, k-1\}$. Define $F$ by
    \begin{align*}
        F=\bigcup_{i=1,2}\{w: \{\boldsymbol{0}, \boldsymbol{0}+e_i\}\rightarrow \mathcal{A} \mid w(\boldsymbol{0})=w(\boldsymbol{0}+e_i)\}.
    \end{align*}
    Then \(X_F\) is called the \(k\){\it -checkerboard shift}, which consists of configurations \(\underline{x}\in \mathcal{A}^{\mathbb{Z}^2}\) where no two adjacent sites have the same symbol.
\end{example}

The hard square shift has a safe symbol \(0\), as replacing any letter in a configuration by \(0\) yields an admissible configuration.  
In contrast, the \(k\)-checkerboard shift does not have a safe symbol for any \(k \geq 2\), since changing a letter arbitrarily may create adjacent sites with the same symbol.  
However, for \(k \geq 5\), the \(k\)-checkerboard shift satisfies SSF, since there are always enough remaining symbols to fill a site without violating adjacency constraints.


For a continuous function $f$ and a nonempty subset $T\subset \mathbb{Z}^2$ define a {\it Birkhoff sum} over $T$ by
\begin{align*}
    S_T f=\sum_{\boldsymbol{u}\in T} f\circ \sigma^{\boldsymbol{u}}.
\end{align*}

\section{Proof of the main theorem}

First we recall the following Lemma.

\begin{lemma}[A version of \textnormal{\cite[Lemma 2.1.]{GQS}}]
\label{lemma:Lip_bound}
    Let $J\subset X$ be a subset of a compact metric space $X$ and $f$ be a Lipschitz continuous function with $f|_J=c$ for some constant $c\in \mathbb{R}$.
    For $\varepsilon>0$ and a Lipschitz continuous function $g$ with $\|f-g\|_{{\rm Lip}}<\varepsilon$ we have
    \begin{align*}
        |g(x)-g(y)|<\varepsilon d(x,y)
    \end{align*}
    for all $x,y\in J$.
\end{lemma}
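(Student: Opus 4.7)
The plan is to exploit the simple observation that, because $f$ is constant on $J$, any variation of $g$ between two points of $J$ coincides with the variation of $g-f$, and the latter is controlled by $\|f-g\|_{\mathrm{Lip}}$. First I would fix arbitrary $x,y\in J$ and use $f(x)=f(y)=c$ to write
\[
g(x)-g(y) \;=\; \bigl(g(x)-f(x)\bigr)-\bigl(g(y)-f(y)\bigr)\;=\;(g-f)(x)-(g-f)(y),
\]
thereby shifting the entire estimate onto the difference $g-f$.

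Next I would apply the definition of the Lipschitz norm to $g-f$. Since $\|\cdot\|_{\mathrm{Lip}}$ dominates the Lipschitz seminorm, $g-f$ is Lipschitz with constant at most $\|f-g\|_{\mathrm{Lip}}$, so
\[
|g(x)-g(y)| \;=\; \bigl|(g-f)(x)-(g-f)(y)\bigr| \;\leq\; \|f-g\|_{\mathrm{Lip}}\,d(x,y) \;<\; \varepsilon\,d(x,y),
\]
where the final strict inequality uses the hypothesis $\|f-g\|_{\mathrm{Lip}}<\varepsilon$ and is to be read in the (harmless) convention that the estimate is meaningful for $x\neq y$. This completes the argument: the only non-trivial ingredient is recognizing that subtracting a constant does not affect the Lipschitz seminorm, so I do not anticipate any genuine obstacle. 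The lemma is essentially a book-keeping device that will later be used in Section 3 to transfer the flatness of the penalty function $f$ on the set of globally admissible configurations into a Lipschitz-type control on perturbations $g$.
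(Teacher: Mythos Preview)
Your argument is correct and is exactly the natural one: the identity $g(x)-g(y)=(g-f)(x)-(g-f)(y)$ on $J$ together with the bound on the Lipschitz seminorm of $g-f$ immediately gives the claim. The paper does not supply its own proof of this lemma---it is simply recalled as a version of \cite[Lemma~2.1]{GQS}---so there is nothing further to compare.
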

This lemma will be applied in our setting with $J=f^{-1}\{0\}$ and also with $J=f^{-1}\{- 1\}$.
With this preparation, we now proceed to the proof of our main theorem.
A key feature of this proof is its extension of the coupling and splicing argument, as well as the "path-wise surgery" technique from \cite{GQS}, to a two-dimensional case.

\begin{proof}[Proof of Theorem \ref{stability}]
Let $\varepsilon=\frac{1}{64}$ and $g$ be a Lipschitz function with $\|f-g\|_{{\rm Lip}}<\varepsilon$.

Set $I=f^{-1}\{0\}$.
Since the set of maximizing measures is convex and closed, it suffices to prove the result for ergodic measures.
Let $\mu$ be an ergodic invariant measure supported on $X^c$.

\noindent
(Case 1). $\mu(I^c)\geq 1/2$.

For every $\underline{x}\in \mathcal{A}^{\mathbb{Z}^2}$ we have $|f(x)-g(x)|<\varepsilon$ and
$\int f\ d\mu=-\mu(I^c)\leq -1/2$, then we have
\begin{align*}
    \int g\ d\mu=\int f\ d\mu+\int (g-f)\ d\mu\leq -\frac{1}{2}+\varepsilon=-\frac{33}{64}.
\end{align*}
On the other hand, for an invariant measure $\nu$ supported on $X$ we have $\int f\ d\nu=0$.
Hence we have
\begin{align*}
    \int g\ d\nu=\int f\ d\nu+\int (g-f)\ d\nu=\int (g-f)\ d\nu\geq -\varepsilon= -\frac{1}{64},
\end{align*}
which completes the proof.

\noindent
(Case 2). $\mu(I^c)\leq 1/2$.

Let $\underline{x}$ be a generic point for the measure $\mu$.
Let $S^0=\{\boldsymbol{0}\}$ if $\underline{x}\in I^c$, $S^0=\emptyset$ otherwise.
For each $i\in \mathbb{N}$ let
\begin{align*}
    S^i=\{\boldsymbol{u}\in \Lambda_i\setminus \Lambda_{i-1} \mid \sigma^{\boldsymbol{u}}\underline{x}\in I^c\}
    =S^{t(i)}\sqcup S^{b(i)} \sqcup S^{r(i)} \sqcup S^{l(i)}
\end{align*}
where
\begin{align*}
    S^{t(i)}&=\{(u_1, u_2)\in S^i \mid u_2=i\}, \quad
    S^{b(i)}=\{(u_1, u_2)\in S^i \mid u_2=-i\},\\
    S^{r(i)}&=\{(u_1, u_2)\in S^i \mid u_1=i, u_2\notin\{i,-i\}\},\\
    S^{l(i)}&=\{(u_1, u_2)\in S^i \mid u_1=-i, u_2\notin\{i,-i\}\}.
\end{align*}

\begin{figure}
     \centering
     \includegraphics[width=0.5\linewidth]{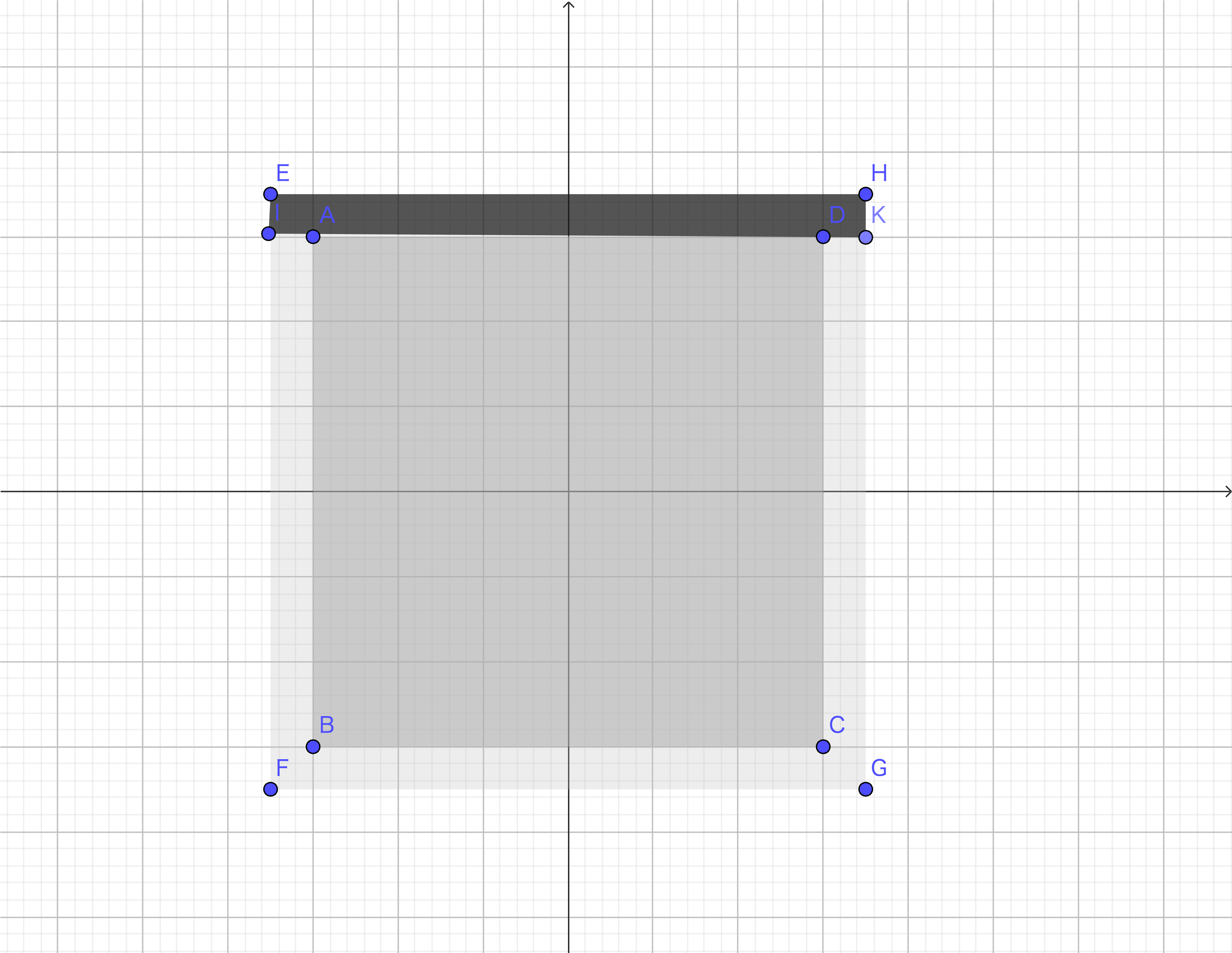}
     \caption{}
     \label{fig:enter-label}
 \end{figure}

For each \(\tau \in \{t(i), b(i), r(i), l(i)\}\) and
$\boldsymbol{u}, \boldsymbol{v}\in S^{\tau}$ set a relation $\boldsymbol{u} r\boldsymbol{v}$ by if they are adjacent, that is, 
\begin{align*}
    \boldsymbol{u}r\boldsymbol{v}\Leftrightarrow |\boldsymbol{u}-\boldsymbol{v}|=1.
\end{align*}
Moreover define the equivalent relation $\sim$ on $S^{\tau}$ by
\begin{align*}
    \boldsymbol{u}\sim \boldsymbol{v} \Leftrightarrow \ \mbox{there exist}\ \boldsymbol{w}^1, \ldots, \boldsymbol{w}^p\in S^{\tau}\ \mbox{such that}\ \boldsymbol{u}r\boldsymbol{w}^1; \boldsymbol{w}^1r \boldsymbol{w}^2; \cdots; \boldsymbol{w}^p r \boldsymbol{v}.
\end{align*}
Then we get the segments of bad words on $[-i,i]\times \{i\}$.
Set $S^{\tau}/\sim=\{B_k^{\tau}\}_{k=1}^{K_{\tau}}$ where the indices $k$ increase from left to right on the top and bottom sides, and from bottom to top on the right and left sides.
Setting
\[
\alpha_k^\tau = 
\begin{cases}
\min\{u_1 \mid (u_1,u_2)\in B_k^\tau\} & \text{if } \tau\in\{t(i),b(i)\},\\
\min\{u_2 \mid (u_1,u_2)\in B_k^\tau\} & \text{if } \tau\in\{r(i),l(i)\},
\end{cases}
\]
and
\[
\beta_k^\tau = 
\begin{cases}
\max\{u_1 \mid (u_1,u_2)\in B_k^\tau\} & \text{if } \tau\in\{t(i),b(i)\},\\
\max\{u_2 \mid (u_1,u_2)\in B_k^\tau\} & \text{if } \tau\in\{r(i),l(i)\},
\end{cases}
\]
we have
\[
B_k^\tau =
\begin{cases}
[\alpha_k^\tau,\beta_k^\tau]\times\{i\} & \text{if } \tau=t(i),\\
[\alpha_k^\tau,\beta_k^\tau]\times\{-i\} & \text{if } \tau=b(i),\\
\{i\}\times[\alpha_k^\tau,\beta_k^\tau] & \text{if } \tau=r(i),\\
\{-i\}\times[\alpha_k^\tau,\beta_k^\tau] & \text{if } \tau=l(i).
\end{cases}
\]

Before proving the main theorem, we show the following lemma, which will be used to apply the SSF property.

\begin{lemma}
\label{lem:replace}
    For \(i \in \mathbb{Z}\), an interval \([\alpha, \beta] \subset \mathbb{Z}\), and a configuration \(z\) on \(\partial ([\alpha,\beta]\times\{i\}) \cup ([\alpha,\beta]\times\{i\})\),  
    there exists a configuration \(w\) on \([\alpha,\beta]\times\{i\}\) such that \(z|_{\partial ([\alpha,\beta]\times\{i\})}w\) is locally admissible.

    The same holds for a configuration on $z$ on \(\partial(\{i\}\times[\alpha,\beta])\cup(\{i\}\times[\alpha,\beta])\).
\end{lemma}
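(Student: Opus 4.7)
The plan is to build $w$ one site at a time, from left to right along the row, applying SSF at each step. At step $k$ (for $k = 0, 1, \ldots, \beta - \alpha$), to define $w(\alpha+k, i)$ we need a configuration on $\partial\{(\alpha+k, i)\}$ so that SSF applies. Three of the four neighbors are already specified: the top site $(\alpha+k, i+1)$ and the bottom site $(\alpha+k, i-1)$ come from $z$, and the left site $(\alpha+k-1, i)$ is either $z(\alpha-1, i)$ (when $k=0$) or the previously defined $w(\alpha+k-1, i)$. The only issue is the right neighbor $(\alpha+k+1, i)$: it belongs to $z$ when $k = \beta - \alpha$, but otherwise lies in the row and has not yet been defined.

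To handle this, I will assign an arbitrary placeholder symbol to the right neighbor whenever it is not yet determined, solely in order to supply SSF with a full boundary configuration. SSF then produces an admissible value, which I take as $w(\alpha+k, i)$; the placeholder is discarded. At step $k+1$ the actual value of the right neighbor is defined by the same procedure, now using the genuine $w(\alpha+k, i)$ as its left neighbor. Local admissibility of $z|_{\partial([\alpha,\beta]\times\{i\})}w$ will then be checked pair by pair: the vertical pairs linking the row to the top or bottom boundary, and the two horizontal endpoint pairs at $(\alpha-1, i)$ and $(\beta+1, i)$, are guaranteed directly by the SSF step that introduces the relevant row-site; the intra-row pairs $((\alpha+k, i), (\alpha+k+1, i))$ are secured not at step $k$ but at step $k+1$, where SSF ensures the new symbol is non-forbidden with its left neighbor; and pairs lying entirely inside $\partial([\alpha,\beta]\times\{i\})$ involve only values of $z$, which must already be locally admissible for the conclusion to be achievable.

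The argument for the vertical segment $\{i\}\times[\alpha,\beta]$ is identical after exchanging the roles of the two coordinates. The main subtlety, and the only non-routine step, is the delayed verification of the intra-row constraints: one must check that the arbitrariness of the placeholder at step $k$ does not corrupt admissibility, which becomes clear once one observes that the pair $((\alpha+k,i),(\alpha+k+1,i))$ is genuinely tested only when both endpoints have been properly defined, namely at step $k+1$.
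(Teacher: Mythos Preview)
Your proof is correct and follows essentially the same left-to-right inductive application of SSF as the paper. The one cosmetic difference is that you introduce an arbitrary placeholder for the right neighbor, apparently overlooking that $z$ is by hypothesis already defined on the row $[\alpha,\beta]\times\{i\}$; the paper simply uses those given $z$-values as the right-neighbor input to SSF and overwrites them one by one, which avoids the need for a placeholder but leads to exactly the same delayed verification of the intra-row constraints that you spell out.
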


\begin{proof}
    Using SSF inductively, we construct \(w\) as follows.  
    First, replace \(z|_{\{(\alpha, i)\}}\) by a symbol \(a\) such that \(z|_{\partial{\{(\alpha, i)\}}}a\) is admissible.  
    Let \(z^{(0)}\) denote the resulting configuration on \(\partial([\alpha, \beta]\times\{i\})\cup ([\alpha,\beta]\times\{i\}) \).  

    Then, for each \(j \in \{1, 2, \ldots, \beta - \alpha\}\), replace \(z^{(j-1)}|_{\{(\alpha+j, i)\}}\) by a symbol \(a\) such that \(z^{(j-1)}|_{\partial{\{(\alpha+j, i)\}}}a\) is admissible.  
    Define \(z^{(j)}\) as the configuration obtained after this replacement.  

    Finally, set
    \[
        w = z^{(0)}_{(\alpha, i)}z^{(1)}_{(\alpha+1, i)} \cdots z^{(\beta-\alpha)}_{(\beta, i)}.
    \]
    This \(w\) satisfies the desired property.
\end{proof}

Then we define a sequence of configurations on $\mathbb{Z}^2$ inductively as follows.  
First, set
\[
\underline{x}^{(-1)} := \underline{x}.
\]
Next define $\underline{x}^{(i)}$ inductively by
\begin{align*}
    x_{\boldsymbol{u}}^{(i)}=x_{\boldsymbol{u}}^{(i-1)}\quad\mbox{if}\quad \boldsymbol{u}\notin S^i
\end{align*}
and replace the configuration on $S^i$ by using Lemma \ref{lem:replace}.

Then, by definition, there is no bad word in the configuration \(\underline{x}^{(k)}|_{\Lambda_k}\) and the sequence \(\{\underline{x}^{(k)}\}_{k=0}^\infty\) converges.
Denote by the limit \(\tilde{x}\), and it is clear that \(\tilde{x}\in X\).


Fix sufficiently large $N\geq 1$.
We now consider the difference between the Birkhoff sums of $\underline{x}$ and $\tilde{\underline{x}}$ over $\Lambda_N$ :
\begin{align}
    S_{\Lambda_N}g(\underline{x})-S_{\Lambda_N}g(\tilde{\underline{x}})
    &=\sum_{i=0}^{N}\left(S_{\Lambda_N}g(\underline{x}^{i-1})-S_{\Lambda_{N}}g(\underline{x}^{i})\right)
    +S_{\Lambda_N}g(\underline{x}^{(N)})-S_{\Lambda_N}g(\tilde{\underline{x}}).
    \label{gap_with_periodic}
\end{align}
The last two terms can be bounded as follows:
\begin{align}
    S_{\Lambda_N}g(\underline{x}^{(N)})-S_{\Lambda_N}g(\tilde{\underline{x}})
    &=2\sum_{i=1}^{N}\frac{\#(\Lambda_{N+1}\setminus \Lambda_N)}{2^i}
    \leq 2(8N+1).
    \label{third}
\end{align}

In order to provide an upper bound for the summation term, we analyze the difference between the Birkoff sums of $\underline{x}^{i-1}$ and $\underline{x}^i$ over $\Lambda_N$ by considering contributions from bad words and others separately.

Fix $i\geq 0$.
First we divide $\Lambda_N$ into four regions $\Lambda_N=R^{t(i)}\sqcup R^{b(i)}\sqcup R^{r(i)}\sqcup R^{l(i)}$ such that
\begin{align*}
    R^{t(i)}&=[-N, N]\times [i, N]\cup \{(u_1,u_2)\mid u_1\in[-i, i], |u_1|\leq u_2\leq i-1\}\\
    R^{b(i)}&=[-N,N]\times[-N, -i]\cup \{(u_1, u_2) \mid u_1\in[-i,i], -|u_1|\geq u_2\geq -i+1, (u_1,u_2)\neq \boldsymbol{0}\}\\
    R^{r(i)}&=[i,N]\times [-i+1,i-1]\cup \{(u_1, u_2)\mid u_2\in [-i+1, i -1] ,|u_2|<u_1\leq i-1\}\\
    R^{l(i)}&=[-i, N]\times[-i+1, i-1]\cup \{(u_1, u_2)\mid u_2\in [-i+1, i-1], -|u_2|>u_1\geq -i+1\},
\end{align*}
(see also Figure \ref{fig:region}).

\begin{figure}
    \centering
    \includegraphics[width=0.5\linewidth]{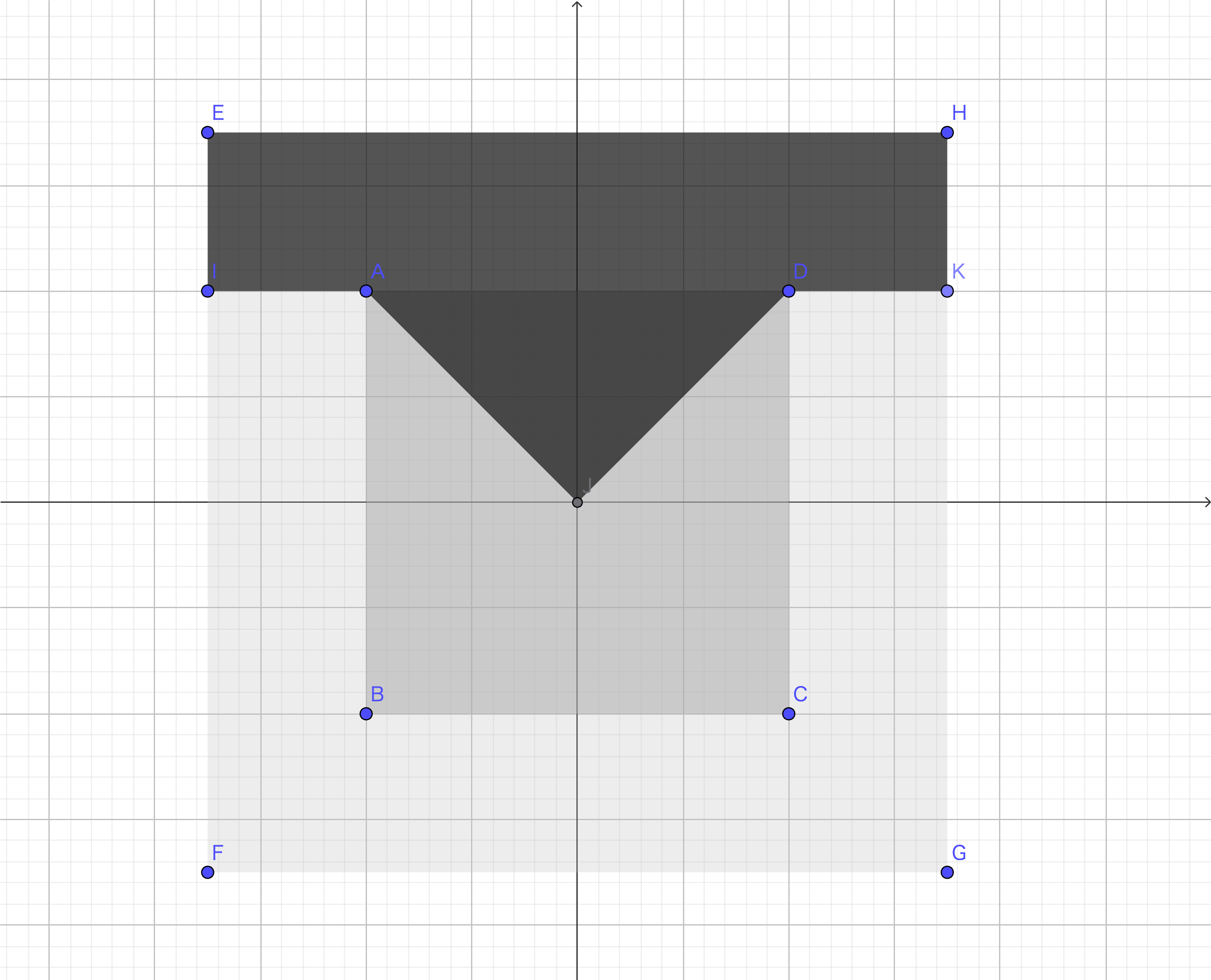}
    \caption{The square \(ABCD\) represents \(\Lambda_i\), the square \(EFGH\) represents \(\Lambda_N\), and the polygon \(EIAJBKH\) represents \(R^{t(i)}\).}
    \label{fig:region}
\end{figure}

\underline{Estimate on bad words:}
For each $\tau \in \{t(i), b(i), r(i),l(i)\}$ and $\boldsymbol{u}\in B^{\tau}_k$ for some $1\leq k\leq K_{\tau}$, we have 
\begin{align*}
    g(\sigma^{\boldsymbol{u}}\underline{x}^{(i-1)})<-1+\varepsilon,
    \quad \mbox{and}\quad
    g(\sigma^{\boldsymbol{u}}\underline{x}^{(i)})>-\varepsilon.
\end{align*}
Hence we have
\begin{align*}
    S_{B_k^{\tau}} g(\underline{x}^{i-1})-S_{B_k^{\tau}}g(\underline{x}^{i})
    <|B_k^{\tau}|(-1+\varepsilon)+|B_k^\tau|\varepsilon
    =|B_k^\tau|(-1+2\varepsilon)
\end{align*}
where $|E|$ denotes the cardinality of $E$.

\underline{Estimate on unchanged words:}

For evaluating the difference between the Birkhoff sums of \(\underline{x}^{(i-1)}\) and \(\underline{x}^{(i)}\) over “unchanged words,” we use an upper bound on the distance between \(\sigma^{\boldsymbol{u}}\underline{x}^{(i-1)}\) and \(\sigma^{\boldsymbol{u}}\underline{x}^{(i)}\) for each \(\boldsymbol{u} \in \Lambda_N \setminus S^i\).  
By Lemma~\ref{lemma:Lip_bound}, this upper bound depends on the distance to bad words when \(f(\sigma^{\boldsymbol{u}}\underline{x}^{(i-1)}) = f(\sigma^{\boldsymbol{u}}\underline{x}^{(i)})\).  

By the definition of the penalty function, we have \(f(\sigma^{\boldsymbol{u}}\underline{x}^{(i-1)}) \neq f(\sigma^{\boldsymbol{u}}\underline{x}^{(i)})\) if \(\boldsymbol{u} \in S^i\) or \(\boldsymbol{u} = \boldsymbol{v} - e_i\) for some \(\boldsymbol{v} \in S^i\) and \(i = 1, 2\).  

In the latter case, we have
\begin{align*}
g&(\sigma^{\boldsymbol{u}}\underline{x}^{(i-1)}) - g(\sigma^{\boldsymbol{u}}\underline{x}^{(i)}) \\
&\quad = g(\sigma^{\boldsymbol{u}}\underline{x}^{(i-1)}) - f(\sigma^{\boldsymbol{u}}\underline{x}^{(i-1)}) 
+ f(\sigma^{\boldsymbol{u}}\underline{x}^{(i-1)}) - f(\sigma^{\boldsymbol{u}}\underline{x}^{(i)})
 + f(\sigma^{\boldsymbol{u}}\underline{x}^{(i)}) - g(\sigma^{\boldsymbol{u}}\underline{x}^{(i)}).
\end{align*}
Since the change from \(\underline{x}^{(i-1)}\) to \(\underline{x}^{(i)}\) does not introduce any new bad words, we obtain
\[
g(\sigma^{\boldsymbol{u}}\underline{x}^{(i-1)}) - g(\sigma^{\boldsymbol{u}}\underline{x}^{(i)}) \leq \varepsilon\, d(\sigma^{\boldsymbol{u}}\underline{x}^{(i-1)}, \sigma^{\boldsymbol{u}}\underline{x}^{(i)}).
\]

First we consider $R^{t(i)}$ and let $\beta_0^{t(i)}=-N-1$ and $\alpha_{K_{t(i)}+1}^{t(i)}=\beta_{K_{t(i)}+1}^{t(i)}=N+1$.
Then for each $1\leq k\leq K_{t(i)}$, set $c^{t(i)}_k=\lfloor \frac{\alpha_k^{t(i)}-\beta_{k-1}^{t(i)}}{2}\rfloor(> 0)$
and the sets
\begin{align*}
    G_{k}^{t(i)}
    &=([\beta_{k-1}^{t(i)},\beta_k^{t(i)}]\times [i,N])\setminus ([\alpha_k^{t(i)}, \beta_k^{t(i)}]\times \{i\})\\
    &=([\beta_{k-1}^{t(i)},\beta_k^{t(i)}]\times [i,N])\setminus B_k^{t(i)}.
\end{align*}
Remark that we have
\begin{align*}
    \bigcup_{k=1}^{K_{t(i)}+1} G_k^{t(i)}
    &=[-N, N]\times[i,N]\setminus \left(\bigcup_{k=1}^{K_{t(i)}}B_k^{t(i)}\right)\\
    &=R^{t(i)}\setminus \left(\left(\bigcup_{k=1}^{K_{t(i)}}B_k^{t(i)}\right) \cup \{(u_1,u_2)\mid u_1\in[-i, i], |u_1|\leq u_2\leq i-1\}\right).
\end{align*}

Take $1\leq k\leq K_{t(k)}$ such that $c^{t(i)}_k<N-i$.
For $\boldsymbol{u} \in G_k^{t(i)}$, the distance  
$
d(\sigma^{\boldsymbol{u}}\underline{x}^{(i-1)}, \sigma^{\boldsymbol{u}}\underline{x}^{(i)})
$
is determined by three cases, and the computation is divided into four regions:
\begin{align*}
    A&=\{(u_1,u_2)\mid\beta^{t(i)}_{k-1}+1\leq u_1\leq\beta^{t(i)}_{k-1}+c_k^{t(i)}, -i\leq u_2\leq -i+u_1-(\beta^{t(i)}_{k-1}+1) \}\\
    &\quad  \cup \{(u_1,u_2)\mid\beta^{t(i)}_{k-1}+c_k^{t(i)}+1\leq u_1\leq \alpha_k^{t(i)}-1, -i\leq u_2\leq -i+c_k^{t(i)}-1-u_1+(\beta^{t(i)}_{k-1}+c_k^{t(i)}+1) \};\\
    B&=\{(u_1,u_2)\mid \beta^{t(i)}_{k-1}+1\leq u_1\leq\beta^{t(i)}_{k-1}+c_k^{t(i)},-i+u_1-(\beta^{t(i)}_{k-1}+1)\leq u_2\leq -i+c_k^{t(i)}-1\}\\
    &\quad \cup \{(u_1,u_2)\mid \beta^{t(i)}_{k-1}+c_k^{t(i)}+1\leq u_1\leq \alpha_k^{t(i)}-1,-i+u_1-(\alpha_k^{t(i)}-1)\leq u_2\leq -i+c_k^{t(i)}-1\};\\
    C&=\{(u_1, u_2)\mid\alpha_k^{t(i)}\leq u_1\leq \beta_k^{t(i)}, -i+1\leq u_2\leq c_k^{t(i)}\};\\
    D&=[\beta^{t(i)}_{k-1}+1, \beta_k^{t(i)}-1]\times [-i+c_k^{t(i)}+1, N].
\end{align*}
To illustrate this, we assign letters to each area as shown in Figure \ref{distance_bad_words}.  
The red graph represents a path where both $u_1$ and $u_2$ increase by 1 at each step.  

For $(u_1, u_2)$ in the region $A$, the horizontal distance from the bad words is the determining factor.
Specifically,
\begin{align*}
     d(\sigma^{(u_1, u_2)}\underline{x}^{(i-1)}, \sigma^{(u_1, u_2)}\underline{x}^{(i)})=\left\{\begin{array}{ccc}
        \frac{1}{2^{u_1 - \beta_{k-1}^{t(i)}}} &\mbox{if} & u_1  \leq \beta_{k-1}^{t(i)} +c_k^{t(i)}\\
         \frac{1}{2^{\tilde{\alpha}_{n}^{-i} - u_1}}  &  \mbox{if} & u_1  > \beta_{k-1}^{t(i)} +c_k^{t(i)}.
     \end{array}
    \right.
\end{align*}
For $(u_1,u_2)$ in the regions $B,C$ and $D$, the vertical distance is the determining factor.
Specifically,
   \[
   d(\sigma^{(u_1, u_2)}\underline{x}^{(i-1)}, \sigma^{(u_1, u_2)}\underline{x}^{(i)}) = \frac{1}{2^{u_2}}.
   \]

Taking into account the symmetry of regions $A$ and $B$, we compute as follows.
By Lemma \ref{lemma:Lip_bound}, we obtain the following bound:
\begin{align}
            S_{G_{k}^{t(i)}}g(\underline{x}^{(i-1)})-S_{G_{k}^{t(i)}}g(\underline{x}^{(i)})&< \left( \sum_{\ell=1}^{c_k^{t(i)}} 2\ell\cdot \frac{1}{2^\ell}+ \sum_{\ell=1}^{c_k^{t(i)}}2\ell\cdot\frac{1}{2^\ell}+\sum_{\ell=1}^{c_k^{t(i)}}(\beta^{t(i)}_{k}-\alpha^{t(i)}_{k})\frac{1}{2^\ell}\right.\notag\\
            &\hspace{3em}\left.+\sum_{\ell=c_k^{t(i)}}^{N-i}(\beta^{t(i)}_{k}-\beta^{t(i)}_{k-1})\frac{1}{2^\ell}+\sum_{\ell=1}^{c_k^{t(i)}}\frac{2}{2^\ell}\right)\varepsilon\notag\\
            &<\left(\sum_{\ell=1}^{c_k^{t(i)}}\frac{4\ell+2}{2^\ell}+(\beta^{t(i)}_k-\alpha_k^{t(i)})\sum_{\ell=1}^{N-i}\frac{1}{2^\ell}+\sum_{\ell=c_k^{t(i)}+1}^{N-i} \frac{(\alpha_k^{t(i)}-\beta^{t(i)}_{k-1})}{2^\ell}\right)\varepsilon\notag\\
            &\hspace{44pt} (\because \beta^{t(i)}_k-\beta^{t(i)}_{k-1}=\beta^{t(i)}_k-\alpha_k^{t(i)}+\alpha_k^{t(i)}-\beta^{t(i)}_{k-1})\notag\\
            &\leq\left(\sum_{\ell=1}^{c_k^{t(i)}}\frac{4\ell+2}{2^\ell}+(\beta^{t(i)}_k-\alpha_k^{t(i)})\sum_{\ell=1}^{N-i}\frac{1}{2^\ell}+\sum_{\ell=c_k^{t(i)}+1}^{N-i} \frac{2\ell}{2^\ell}\right)\varepsilon\notag\\
            &\hspace{44pt}(\because \alpha_k^{t(i)}-\beta^{t(i)}_{k-1} \leq 2c_k^{t(i)}+1)\notag\\
            &\leq \left(\sum_{\ell=1}^{N-i}\frac{4\ell+2}{2^\ell}+(\beta^{t(i)}_k-\alpha_k^{t(i)})\sum_{\ell=1}^{N-i}\frac{1}{2^\ell}\right)\varepsilon\notag\\
            &=\left(14+(\beta^{t(i)}_k-\alpha_k^{t(i)})\right)\varepsilon.
            \label{even}
\end{align}

        \begin{figure}[h]
            \tiny
            \centering
            \begin{tikzpicture}

                \draw[-,>=stealth,semithick] (-4.5,0)--(6,0) ; 
                \draw[-,>=stealth,semithick] (6.25,0)--(7,0) node[right]{$-i$}; 
                \draw[-,=stealth,semithick] (-2,-1)--(-2,0.9) ; 
                \draw[-,=stealth,semithick] (-2,1.1)--(-2,2.9) ; 
                \draw[-,=stealth,semithick] (-2,3.1)--(-2,4) ; 
                \draw[-,=stealth,dashed] (0,-1)--(0,2) ; 
                \draw[-,=stealth,semithick] (5,-1)--(5,0.9) ; 
                \draw[-,=stealth,semithick] (5,1.1)--(5,2.9) ; 
                \draw[-,=stealth,semithick] (5,3.1)--(5,4) ; 
                \draw[-,=stealth,dashed] (-2,2)--(5,2) ; 
                \draw[-,=stealth,dashed] (2,0)--(2,2) ; 
                \draw (2,0) node[above right]{$\tilde{\alpha}_{n}^{-i}$}; 
                \draw (5,0) node[above right]{$\tilde{\beta}_{n}^{-i}$}; 
                \draw (-2,0) node[above left]{$\tilde{\beta}_{n-1}^{-i}$}; 
                \draw (-2,2) node[left]{$c_{n}^{-i}$}; 
                \draw (0,0) node[below right]{$\tilde{\beta}_{n-1}^{-i}+c_n^{-i}$}; 
                \draw (6.12,0) node{\rotatebox{90}{$\approx$}}; 
                \draw (-2,1) node{$\approx$}; 
                \draw (-2,3) node{$\approx$}; 
                \draw (5,1) node{$\approx$}; 
                \draw (5,3) node{$\approx$}; 
                \draw (-2,3.5) node[left]{$N$}; 
                \draw (6.5,0) node[below]{$N$}; 
                \draw[red] (-2,0)--(-1.75,0) ;
                \draw[red] (-1.75,0)--(-1.75,0.25) ;
                \draw[red] (-1.75,0.25)--(-1.5,0.25) ;
                \draw[red] (-1.5,0.25)--(-1.5,0.5) ;
                \draw[red] (-1.5,0.5)--(-1.25,0.5) ;
                \draw[red] (-1.25,0.5)--(-1.25,0.75) ;
                \draw[red] (-0.75,1.25)--(-0.5,1.25) ;
                \draw[red] (-0.5,1.25)--(-0.5,1.5) ;
                \draw[red] (-0.5,1.5)--(-0.25,1.5) ;
                \draw[red] (-0.25,1.5)--(-0.25,1.75) ;
                \draw[red] (-0.25,1.75)--(0,1.75) ;
                \draw[red] (0,1.75)--(0,2) ;
                
                \draw[red] (2,0)--(1.75,0) ;
                \draw[red] (1.75,0)--(1.75,0.25) ;
                \draw[red] (1.75,0.25)--(1.5,0.25) ;
                \draw[red] (1.5,0.25)--(1.5,0.5) ;
                \draw[red] (1.5,0.5)--(1.25,0.5) ;
                \draw[red] (1.25,0.5)--(1.25,0.75) ;
                \draw[red] (1,1) node{$\ddots$}; 
                \draw[red] (0.75,1.25)--(0.5,1.25) ;
                \draw[red] (0.5,1.25)--(0.5,1.5) ;
                \draw[red] (0.5,1.5)--(0.25,1.5) ;
                \draw[red] (0.25,1.5)--(0.25,1.75) ;
                \draw[red] (0.25,1.75)--(0,1.75) ;
                \draw[red] (0,1.75)--(0,2) ;
                \draw (-0.75,0.5) node{A}; 
                \draw (0.75,0.5) node{A}; 
                \draw (-1.25,1.25) node{B}; 
                \draw (1.25,1.25) node{B}; 
                \draw (2,3) node{D}; 
                \draw (3.5,1) node{C}; 
            \end{tikzpicture}
            \caption{The value of point in good block}
            \label{distance_bad_words}
        \end{figure}

For $1\leq k\leq K^{t(i)}$ such that $N-i\leq c_k^{t(i)}$
there are no regions D and A in Figure \ref{distance_bad_words}, and region B is cut off in the middle.
Hence it is easy to see that we have
\begin{align*}
    S_{G_{k}^{t(i)}}g(\underline{x}^{(i-1)})-S_{G_{k}^{t(i)}}g(\underline{x}^{(i)})
    &< (14+(\beta^{t(i)}_k-\alpha^{t(i)}_k))\varepsilon.
\end{align*}

Then we have
\begin{align*}
    S_{[-N, N]\times [i,N]}g(\underline{x}^{(i)})-S_{[-N,N]\times[i,N]}g(\underline{x}^{(i-1)})
    &<\sum_{k=1}^{K_{t(i)}+1}(14+(\beta_k^{t(i)}-\alpha_k^{t(i)}))\varepsilon+\sum_{k=1}^{K_{t(i)}}|B_k^{t(i)}|(-1+2\varepsilon)\\
    &\leq 14(K_{t(i)}+1)\varepsilon+\sum_{k=1}^{K_{t(i)}}|B_k^{t(i)}|(-1+3\varepsilon)\\
    &\leq 14\varepsilon+(-1+17\varepsilon)|S^{t(i)}|,
\end{align*}
where the last inequality holds because $K_{t(i)}\leq |S^{t(i)}|$.

By the similar argument, the estimate on the remain region $\{(u_1, u_2)\mid u_1\in [-i, i], |u_1|\leq u_2\leq i-1\}=R^{t(i)}\cap \Lambda_{i-1}$ is bounded by
\begin{align*}
    S_{R^{t(i)}\cap \Lambda_{i-1}} g(\underline{x}^i)-S_{R^{t(i)}\cap \Lambda_{i-1}}g(\underline{x}^{i-1})
    &<\sum_{k=1}^{K_{t(i)}+1}(14+(\beta_k^{t(i)}-\alpha_k^{t(i)}))\varepsilon\\
    &\leq 14\varepsilon+15|S^{t(i)}|\varepsilon
\end{align*}
Then we have
\begin{align*}
    S_{R^{t(i)}}g(\underline{x}^{(i)})-S_{R^{t(i)}}g(\underline{x}^{i-1})<28\varepsilon+(-1+32\varepsilon)|S^{t(i)}|.
\end{align*}

For $R^{(b(t))}, R^{(r(i))}, R^{l(i)}$, by the similar argument we have
\begin{align*}
    S_{R^{\tau}}g(\underline{x}^{(i)})-S_{R^{\tau}}g(\underline{x}^{i-1})<28\varepsilon+(-1+32\varepsilon)|S^{t(i)}|.
\end{align*}
for $\tau \in \{b(i), r(i),l(i)\}$.
Combining all, we have
\begin{align*}
    S_{\Lambda_N}g(\underline{x}^{(i)})-S_{\Lambda_N}g(\underline{x^{(i-1)}})<112\varepsilon+(-1+32\varepsilon)\sum_{\tau}|S^{\tau}|
    =112\varepsilon+(-1+32\varepsilon) |S^{i}|.
\end{align*}

Dividing the both sides of \eqref{gap_with_periodic} by $(2N+1)^2$,, we have
\begin{align*}
    \frac{1}{(2N+1)^2}(S_{\Lambda_N}g(\underline{x})-S_{\Lambda_N}g(\tilde{\underline{x}}))
    &\leq\frac{1}{(2N+1)^2}\sum_{i=0}^{N}(112\varepsilon+(-1+32\varepsilon) |S^{i}|)+\frac{2(8N+1)}{(2N+1)^2}\\
    &\leq \frac{112\varepsilon (NL1)+2(8N+1)}{(2N+1)^2}-\frac{1}{2}\frac{1}{(2N+1)^2}\sum_{i=0}^{N}|S^i|\\
    &= \frac{112\varepsilon (N+1)+2(8N+1)}{(2N+1)^2}-\frac{1}{2}\frac{1}{(2N+1)^2}\#\{\boldsymbol{u}\in\Lambda_N\mid \sigma^{\boldsymbol{u}}\underline{x}\in I^c\}
\end{align*}

Hence we have
\begin{align*}
    \liminf_{N\to\infty}\frac{1}{(2N+1)^2}S_{\Lambda_N}g(\underline{x})+\frac{1}{2}\mu(I^c) &< \liminf_{N\to\infty} \frac{1}{(2N+1)^2}S_{\Lambda_N}g(\tilde{\underline{x}}).
\end{align*}
Since $\underline{x}$ is a generic point of $\mu$ and we see that there exists an invariant probability measure $\nu$ with support in $X$ by passing to a subsequence of the sequence of empirical measures for $\tilde{x}$, 
we have
\begin{align*}
    \int g d\mu<\int g d\nu,
\end{align*}
which complete the proof.
\end{proof}





\vspace*{33pt}

\noindent
\textbf{Acknowledgements.}~ 

The second author was partially supported by JSPS KAKENHI Grant Number 21K13816.

\vspace{11pt }
\noindent
\textbf{Use of AI tools.}~
The authors used ChatGPT (GPT-4, OpenAI) for English proofreading and improving the clarity of the manuscript. The AI tool was used solely to enhance language readability and did not influence the originality or the intellectual content of this research.

\vspace{11pt }
\noindent
\textbf{Data Availability.}~
Data sharing not applicable to this article as no datasets were generated or analyzed during the current study.


\bibliographystyle{alpha}
\bibliography{stability_hardsquare.bib}

\end{document}